\def\sn{\bbs^{n-1}}
\def\Cal{\mathcal}
\def\T{{\Cal T}}
\def\bbr{{\Bbb R}}
\def\bbd{{\Bbb D}}
\def\bbs{{\Bbb S}}
\def\const{{\hbox{\rm const}}}
\def\rn{\bbr^n}
\def\part{\partial}
\def\intl{\int\limits}
\def\Gam{\Gamma}
\def\Om{\Omega}
\def\a{\alpha}
\def\om{\omega}
\def\Del{\Delta}
\def\vp{\varphi}
\def\g{\gamma}
\def\gam{\gamma}
\def\sig{\sigma}
\def\lam{\lambda}
\def\z{\zeta}
\def\e{\varepsilon}
\def\t{\tau}
\def\th{\theta}
\font\frak=eufm10
\def\fr#1{\hbox{\frak #1}}
\def\frS{\fr{S}}
\newtheorem{theorem}{Theorem}[section]
\newtheorem{lemma}[theorem]{Lemma}
\theoremstyle{definition}
\theoremstyle{remark}
\newtheorem{remark}[theorem]{Remark}
\theoremstyle{corollary}
\newtheorem{proposition}[theorem]{Proposition}
\numberwithin{equation}{section}
\newcommand{\be}{\begin{equation}}
\newcommand{\ee}{\end{equation}}
\newcommand{\bea}{\begin{eqnarray}}
\newcommand{\eea}{\end{eqnarray}}
\newcommand{\Bea}{\begin{eqnarray*}}
\newcommand{\Eea}{\end{eqnarray*}}
\def\sideremark#1{\ifvmode\leavevmode\fi\vadjust{\vbox to0pt{\vss% the remark
 \hbox to 0pt{\hskip\hsize\hskip1em%                          will appear only
\vbox{\hsize2cm\tiny\raggedright\pretolerance10000%          on the side
 \noindent #1\hfill}\hss}\vbox to8pt{\vfil}\vss}}}%
\begin{document}

\title [Spherical Slice Transform ] {On the Spherical Slice Transform }

%    Information for first author

\author{Boris Rubin}
 \address{Department of Mathematics, Louisiana State
University,  Baton Rouge, LA, 70803, USA}
\email{borisr@lsu.edu}

\subjclass[2000]{Primary 44A12; Secondary 47G10}

%\date{November 26, 2001 and, in revised form 29.03.02.}

%\dedicatory{}

\keywords{ Spherical slice transforms, Radon-John transforms,  inversion
formulas}

\begin{abstract} 
We study the spherical slice transform $\frS$ which assigns to a function $f$ on the unit sphere $S^n$  in $\bbr^{n+1}$  the  
integrals of $f$ over cross-sections of $S^n$ by $k$-dimensional affine planes passing through the north pole $(0, \ldots, 0,1)$. 
These transforms are known when  $k=n$. We consider all $2\le k\le n$ and obtain an explicit formula connecting $\frS$ with the classical $(k-1)$-plane Radon-John 
transform $R_{k-1}$ on $\rn$.   Using this connection, known facts for $R_{k-1}$, like inversion formulas, support theorems, representation on zonal functions, and some others, are  reformulated for $\frS$.
\end{abstract}
\maketitle

\section{Introduction}

Let  $S^n$ be the unit sphere in $\bbr^{n+1}$, $n\ge 2$. Given a  point $a \in \bbr^{n+1}$ and an integer $k$, $2\le k\le n$, let $\T_a$ be the manifold of 
all $k$-dimensional affine planes $\t$ in $\bbr^{n+1}$ passing through $a$ and intersecting  $S^n$. The {\it spherical slice transform} 
assigns to a function $f$ on $S^n$  the collection  of integrals of $f$ over cross-sections $\t \cap \,S^n$. Specifically, we set
\be\label{Sli}
 (\frS_a f)(\t \cap \,S^n)=\intl_{\t \cap \,S^n} f(\eta) \,dS_\t(\eta), \qquad \t \in \T_a,\ee
where integration is performed with respect to the standard surface area measure on the subsphere $\t \cap S^n$.

Depending on the configuration of cutting planes, one can distinguish different kinds of spherical slice transforms. The classical case when 
 $a=o$, the origin of $\bbr^{n+1}$, has more than one hundred years' history. Such operators are known as the Funk transforms (or Minkowski-Funk transforms, or spherical Radon transforms) 
and play an important role in different branches of  analysis,  geometry, and tomography; see, e.g., \cite {Fu11, Fu13, Ga, GGG1, GRS, H11, Min, P1, P2,  Ru15}.

The study of the operators  $\frS_a$ with $a\neq o$, arising in spherical tomography and spherical integral geometry  is quite recent; see, e.g., \cite{GRS}.
  If $a$  lies strictly inside or outside of $S^n$, the operators (\ref{Sli}) were studied in a series of papers by  
  Agranovsky and Rubin \cite {Ag, AR1, AR2, Ru19},  Quellmalz \cite {Q, Q1},  Salman \cite {Sa16, Sa17}.   

It turns out that the cases $a \in S^n$ and $a \notin S^n$ are essentially different. Specifically, in the second case,   
 $\frS_a$ is non-injective (i.e., has a non-trivial kernel) on standard function spaces, like $C(S^n)$ or $L^p(S^n)$, while in the first case the injectivity is available.  
 The reason for this striking phenomenon is that if $a \notin S^n$,  then  $\frS_a$ is equivalent to the classical Funk transform which annihilates odd functions;  
  see \cite {Ag, AR1, AR2, Ru19} for details.  On the contrary, if  $a \in S^n$, then 
 $\frS_a$ inherits properties of the Radon-John transform over affine planes (see (\ref {Slik}) below), and therefore it is injective.

  In the present paper, we focus on the case $a \in S^n$. Without loss of generality, we assume that $a$ is the north pole $N=(0, \ldots, 0,1)$ 
and write $\frS$ in place of $\frS_a$.

The transformation $\frS$  for $n=2$ was introduced  
by   Abouelaz and  Daher \cite {AbD} who obtained  inversion formulas for $\frS f$, provided that $f$ is continuous,  vanishes identically in a neighborhood 
of  $N$,  and  zonal  (i.e. invariant under rotations about the last coordinate axis).
In this case, $\frS f$ is represented by a one-dimensional Abel-type integral that can be explicitly inverted using the tools of 
 fractional differentiation. A general (not necessarily zonal) case  for $n=2$ was studied by Helgason   \cite [p. 145]{H11} who proved the support theorem and injectivity of  $\frS$
  on smooth  functions  vanishing at $N$ with all derivatives. 
 These results were extended by the author 
 \cite [Section 7.2]{Ru15} to all $k=n\ge 2$  under minimal assumptions for functions in Lebesgue spaces.

 The  term {\it spherical slice transform} was introduced  in  \cite [p. 145]{H11} specifically for $\frS f$. However, 
  it can be equally 
 adopted   to all  Radon-like transforms over plane sections of the sphere. These include  operators  $\frS_a$ with any  $a\in \bbr^{n+1}$, 
 the limiting case  $a=\infty$ when all cross-sections  are parallel  \cite{AR2, HQ, Ru19a}, 
  the spherical section transforms over geodesic subspheres of $S^n$ \cite {Ru00a}, and many others. 
  The list of references at the end of the article may be helpful to the reader who is willing to try his/her hand at some of such  intriguing transforms.

In the present paper, our main concern  is the afore-mentioned operator $\frS $ for arbitrary $2\!\le \!k\!\le \! n$. 
Using the stereographic projection, it will be shown that $\frS$ 
 inherits injectivity and many other properties of 
 the classical Radon-John  transform
\be\label{Slik} (R_{k-1} g)(\z)=\intl_{\z} g(x)\, d_\z x, \qquad \z\in G(n, k-1),\ee
where $G(n, k-1)$ is the manifold of all $(k-1)$-dimensional affine planes in $\rn$ and  $d_\z x $ stands for the standard Euclidean measure on $\z$. 
Our aim is  to make the connection between $\frS$  and $R_{k-1}$ precise and transfer known results for $R_{k-1}g$ to  $\frS f$.
 
 Most of the results of the paper were obtained several years ago,   
 but  remained unpublished. Later developments in  \cite {Ag, AR1, AR2}, related to $\frS_a f$  with  $a \notin S^n$, inspired me to make my  notes available to a wide audience.

 Section 2 contains Preliminaries. In Section 3 we prove the main theorem (see Theorem \ref{th1}) establishing a 
 connection between  $\frS f$ and  $R_{k-1} g$. Section 4 contains corollaries of Theorem \ref{th1}. 
 It includes sharp existence conditions for $\frS f$ in integral terms (Proposition 4.1), representation on zonal functions by Abel-type integrals (Proposition 4.3), 
 support theorem (Theorem 4.4), and inversion formulas (Subsection 4.4). 
 
 For convenience of the reader, we collected some auxiliary facts about Radon-John transforms in the Appendix. 
Here the Support Theorem \ref{hobfu} is new. It 
 contains  a slight, but important, improvement of the remarkable discovery by Kurusa \cite [Theorem 3.1] {Ku94},  establishing essential 
difference between the cases $k=n-1$ and $k<n-1$. 
It turns out that in the case $k<n-1$,  the well-known  rapid decrease condition, as, e.g., in  
 \cite [p. 33]{H11}, can be eliminated. Some results from \cite {Ku94} were generalized in \cite {Ku21}.

\section{Preliminaries}
\subsection{Notation}

In the following, $e_1, \ldots, e_{n+1}$ are the coordinate unit vectors,
\[N=(0, \ldots, 0,1)=e_{n+1}, \qquad \rn= \bbr e_1 \oplus \cdots \oplus \,\bbr e_{n}, \qquad S^{n-1} =S^n \cap \,\rn, \]
$\T$ is the manifold of all $k$-dimensional affine planes $\t$ in $\bbr^{n+1}$ passing through $N$ and  intersecting $\rn$. Abusing notation, we write $|\t|$ for the Euclidean distance from the origin to $\t$;  
  $dS (\cdot)$ (sometimes with a subscript) stands for the  surface area measure on the surface under consideration.  A function $f$ on $S^n$ is called zonal if it is invariant under rotations about the last coordinate axis.

\subsection{Stereographic Projection}
Consider the bijective mapping
\be\label{MUIT} \rn \ni x \xrightarrow{\;\nu\;} \eta\in S^{n}\setminus \{N\},  \qquad \nu (x)=\frac{2x+(|x|^2-1)\,e_{n+1}}
{|x|^2+1}.\ee
 The inverse mapping $\nu^{-1}: S^{n}\setminus \{N\} \rightarrow \rn$ is the  stereographic projection  from the north pole $N$ onto $\rn$.

\vskip 0.1 truecm
\begin{figure}[h]
\centering
\includegraphics[scale=.6]{fig72a.eps}
\caption{$\eta= \om \,\sin \vp + e_{n+1} \cos \, \vp, \quad \om \in S^{n-1}, \quad |x|=\cot (\vp/2)$.} \label{stereo2}
\end{figure}
\vskip 0.1 truecm

The following lemma  is well known; see, e.g.,  \cite [Lemma 7.6]{Ru15}.  We present it here for the sake of completeness and further references.

\begin{lemma} \label {stereoviat} ${}$\hfill

\noindent {\rm (i)} If $f\in L^1 (S^n)$, then
\be \label {stslice}\intl_{S^n} f(\eta)\, d\eta= 2^{n}\intl_{\bbr^{n}} (f\circ \nu) (x)\,\frac{dx}{(|x|^2 +1)^{n}}.\ee

\noindent {\rm (ii)}  If $g\in L^1 (\bbr^{n})$, then
\be \label {stslice1} \intl_{\bbr^{n}} g(x)\, dx=\intl_{S^n} (g\circ\nu^{-1}) (\eta)\,\frac{ d\eta}{(1-\eta_{n+1})^{n}}.\ee
\end{lemma}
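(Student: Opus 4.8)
The plan is to prove part (i) by a direct change of variables --- computing the area factor of the parametrization $\nu:\rn\to S^n\setminus\{N\}$ from (\ref{MUIT}) --- and then to deduce part (ii) from (i) by applying it to a reweighted function. For (i), put $\rho=\rho(x)=|x|^2+1$, so that $\nu(x)=\tfrac{2}{\rho}\,x+\bigl(1-\tfrac{2}{\rho}\bigr)e_{n+1}$. Differentiating in $x_j$ and using $\partial_j\rho=2x_j$ gives $\partial_j\nu=\tfrac{2}{\rho}\,e_j+\tfrac{4x_j}{\rho^2}\,(e_{n+1}-x)$ for $1\le j\le n$. Since $x\in\rn$ we have $e_j\cdot e_{n+1}=0$, $\;e_j\cdot x=x_j$, and $|e_{n+1}-x|^2=1+|x|^2=\rho$; substituting these into the inner products $\partial_j\nu\cdot\partial_k\nu$, the terms involving $x_jx_k$ cancel and one is left with $\partial_j\nu\cdot\partial_k\nu=\tfrac{4}{\rho^2}\,\delta_{jk}$. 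Thus $\nu$ is conformal with conformal factor $2/\rho$, the Gram determinant of $(\partial_1\nu,\dots,\partial_n\nu)$ equals $(4/\rho^2)^n$, and its square root is $2^n/\rho^n$; hence under $\nu$ the surface area measure pulls back to $2^n(|x|^2+1)^{-n}\,dx$.

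Since $\{N\}$ is a null set on $S^n$, the area formula for parametrized surfaces now yields (\ref{stslice}) first for nonnegative measurable $f$ and then, by splitting into positive and negative parts, for every $f\in L^1(S^n)$, the two sides being finite or infinite together. For (ii), note that the last coordinate of $\nu(x)$ is $1-2/\rho=(|x|^2-1)/(|x|^2+1)$, so $1-\nu(x)_{n+1}=2/(|x|^2+1)$. Applying (\ref{stslice}) to $h(\eta)=(g\circ\nu^{-1})(\eta)\,(1-\eta_{n+1})^{-n}$ gives $(h\circ\nu)(x)=2^{-n}(|x|^2+1)^n\,g(x)$, and inserting this into the right-hand side of (\ref{stslice}) cancels the factors $2^{\pm n}$ and $(|x|^2+1)^{\pm n}$, leaving $\intl_{S^n}h(\eta)\,d\eta=\intl_{\rn}g(x)\,dx$, which is exactly (\ref{stslice1}).

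The argument is entirely elementary, and the only step calling for a little care is the cancellation in $\partial_j\nu\cdot\partial_k\nu$, which hinges on the identity $|e_{n+1}-x|^2=|x|^2+1$ for $x\in\rn$; equivalently, $\nu$ is the restriction to $\rn$ of the inversion $y\mapsto e_{n+1}+2(y-e_{n+1})/|y-e_{n+1}|^2$ of $\rnn$, which is conformal with factor $2/|y-e_{n+1}|^2$ and so carries the hyperplane $\rn$ conformally onto $S^n\setminus\{N\}$. Everything else --- discarding the null set $\{N\}$ and the passage from nonnegative measurable functions to $L^1$ --- is routine.
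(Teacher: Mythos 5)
Your argument is correct, and for part (i) it takes a genuinely different route from the paper. The paper passes to spherical coordinates $\eta=\om\sin\vp+e_{n+1}\cos\vp$ on the left-hand side and performs the one-dimensional substitution $s=\cot(\vp/2)$, then recognizes the right-hand side as the polar-coordinate form of the $\rn$-integral with $x=s\om$; the Jacobian never appears explicitly. You instead compute the Gram matrix of the parametrization $\nu$ directly, and your calculation checks out: with $\rho=|x|^2+1$ one has $\partial_j\nu=\frac{2}{\rho}e_j+\frac{4x_j}{\rho^2}(e_{n+1}-x)$, and since $e_j\cdot(e_{n+1}-x)=-x_j$ and $|e_{n+1}-x|^2=\rho$, the cross terms $-\frac{16x_jx_k}{\rho^3}$ and $+\frac{16x_jx_k}{\rho^4}\rho$ cancel exactly, leaving $\partial_j\nu\cdot\partial_k\nu=\frac{4}{\rho^2}\delta_{jk}$ and hence the area factor $2^n\rho^{-n}$. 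Your approach is more computational at the level of partial derivatives but buys a conceptual dividend the paper's proof does not: it exhibits $\nu$ as a conformal map (indeed the restriction of an inversion of $\rnn$ centered at $N$), which explains \emph{why} the density is the $n$-th power of a single scalar factor, and it avoids spherical coordinates altogether. The paper's proof is shorter but leans on the rotational symmetry to collapse everything to a one-variable substitution. For part (ii) the two proofs coincide in substance: the paper sets $g(x)=2^n(|x|^2+1)^{-n}(f\circ\nu)(x)$ in (\ref{stslice}) and invokes $|x|^2+1=2/(1-\eta_{n+1})$, which is exactly your reweighting $h(\eta)=(g\circ\nu^{-1})(\eta)(1-\eta_{n+1})^{-n}$ read in the other direction. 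Your handling of the measure-theoretic routine (null set $\{N\}$, passage from nonnegative measurable functions to $L^1$) is also fine.
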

\begin{proof} {\rm (i)} Passing to spherical coordinates, we have
\bea
l.h.s.
&=&\intl_0^\pi \sin^{n-1} \vp\, d\vp
\intl_{S^{n-1}} \!\!f(\om\, \sin
\vp + e_{n+1}\, \cos \, \vp)\, d\om\nonumber\\
&{}& \qquad \mbox {\rm ($s=\cot (\vp/2)$)}\nonumber\\
&=& 2^n\intl_0^\infty \frac{s^{n-1} ds}{(s^2 +1)^{n}}\intl_{\sn} \!\!\! f \left (\frac{2s\om  \!+\! (s^2\! -\!1)\,e_{n+1}}{s^2 +1}\right )\, d\om\!=\!r.h.s.\nonumber\eea

 {\rm (ii)} We set $g(x)=2^n (|x|^2 +1)^{-n} (f\circ \nu) (x)$ in (\ref{stslice}). Since
  \be\label{ll3567g} |x|^2 +1= s^2 +1=\cot^2 (\vp/2) +1=\frac{2}{1-\cos \vp}=\frac{2}{1-\eta_{n+1}},\ee
  the result  follows.
\end{proof}

\section{Connection with the Radon-John Transform}

\begin{theorem} \label{th1} Let $2\le k\le n$, $\t \in \T$. Then
\be\label{Slik1}
(\frS f)(\t \cap S^n)=(R_{k-1} g)(\t \cap \rn), \qquad g(x)=\frac{2^{k-1} (f\circ \nu)(x)}{(|x|^2+1)^{k-1}},\ee
provided that either side of the first equality  in (\ref{Slik1}) exists  in the Lebesgue sense.
\end{theorem}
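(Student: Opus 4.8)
The key idea is that the stereographic projection $\nu$ carries the subsphere $\t \cap S^n$ onto the $(k-1)$-plane $\t \cap \rn$, so that integrating $f$ over the subsphere amounts to integrating a suitably weighted pullback of $f$ over the plane. First I would fix $\t \in \T$ and describe the geometric picture: since $\t$ is a $k$-plane through $N$, the punctured slice $\t \cap S^n \setminus \{N\}$ is itself a (lower-dimensional) sphere through the north pole, and its stereographic image under $\nu^{-1}$ from $N$ is precisely the affine $(k-1)$-plane $\t \cap \rn$. This is the classical fact that stereographic projection sends spheres through the projection point to affine subspaces; I would verify it directly from the explicit formula for $\nu$ in \eqref{MUIT}, observing that $\nu$ maps the $k$-dimensional linear span $V=\span(\t)$ intersected with $\rn$ onto $\t \cap S^n$ minus $N$.

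The second step is to apply the change-of-variables Lemma \ref{stereoviat} on the lower-dimensional sphere $\t \cap S^n$. Working inside the $k$-dimensional space containing $\t$, the same stereographic-projection identity \eqref{stslice} holds with $n$ replaced by $k-1$: for a function $h$ on the $(k-1)$-sphere $\t \cap S^n$,
\be
\intl_{\t \cap S^n} h(\eta)\, dS_\t(\eta) = 2^{k-1} \intl_{\t \cap \rn} (h\circ \nu)(x)\, \frac{d_\t x}{(|x|^2+1)^{k-1}},
\ee
where $d_\t x$ is Euclidean measure on $\t \cap \rn$. Taking $h = f|_{\t \cap S^n}$, the left side is $(\frS f)(\t \cap S^n)$ by definition \eqref{Sli}, and the right side is exactly $\intl_{\t \cap \rn} g(x)\, d_\t x = (R_{k-1} g)(\t \cap \rn)$ with $g(x) = 2^{k-1}(f\circ \nu)(x)/(|x|^2+1)^{k-1}$ as in \eqref{Slik1}. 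One small point to check is that $|x|$ here is the norm in $\rn$ and agrees with the norm within the lower-dimensional plane $\t \cap \rn$ (it does, since $\t \cap \rn$ is an affine subspace of $\rn$ with the induced metric), so the weight is the restriction of the weight in the definition of $g$.

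The main technical obstacle is the bookkeeping that reduces the $n$-dimensional Lemma \ref{stereoviat} to the lower-dimensional version adapted to $\t$: one must be careful that $\t$ is an \emph{affine} plane, not a linear one, so the relevant sub-sphere $\t \cap S^n$ is a sphere of radius $\sqrt{1-|\t|^2}$ centered off the origin of that plane, and the stereographic projection from $N$ must be set up intrinsically within the $k$-dimensional span of $\t$. I would handle this by choosing an isometry of $\bbr^{n+1}$ fixing $N$ and the last axis that moves $\span(\t)$ into a coordinate subspace $\bbr^{k} = \bbr e_1 \oplus \cdots \oplus \bbr e_{k-1} \oplus \bbr e_{n+1}$, so that $\t \cap S^n$ becomes a standard $(k-1)$-sphere $S^{k-1}$ through $N$ in $\bbr^k$ and $\t \cap \rn$ becomes an affine translate of $\bbr^{k-1}$; then Lemma \ref{stereoviat}(i) applies verbatim in dimension $k-1$. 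Since all the measures ($dS_\t$, $d_\t x$, and the weights) are invariant under this isometry, the identity transfers back. Finally, the equivalence of existence in the Lebesgue sense on the two sides is immediate because the change of variables is a diffeomorphism with a positive, locally bounded Jacobian away from $N$ (and $N \notin \rn$ contributes nothing), so $f|_{\t \cap S^n} \in L^1$ iff $g|_{\t \cap \rn} \in L^1$, with equal integrals.
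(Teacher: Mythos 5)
Your overall strategy coincides with the paper's: use the fact that $\nu$ maps $\t \cap \rn$ bijectively onto $(\t \cap S^n)\setminus\{N\}$ and compute how the $(k-1)$-dimensional measures correspond. The identity you assert,
\[
\intl_{\t \cap S^n} h(\eta)\, dS_\t(\eta) \;=\; 2^{k-1} \intl_{\t \cap \rn} (h\circ \nu)(x)\, \frac{d_\t x}{(|x|^2+1)^{k-1}},
\]
is indeed correct and yields the theorem at once. The gap is that your justification of this identity --- which is the entire content of the theorem --- does not work. You propose an isometry of $\bbr^{n+1}$ fixing $N$ and the last axis after which ``$\t \cap S^n$ becomes a standard $(k-1)$-sphere $S^{k-1}$ through $N$ in $\bbr^k$,'' so that Lemma \ref{stereoviat}(i) applies verbatim in dimension $k-1$. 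No such isometry exists: any isometry preserving $N$, $S^n$ and $\rn$ preserves radii, and $\t \cap S^n$ has radius $\sqrt{1-|\t|^2}<1$ whenever $|\t|>0$, whereas $S^n\cap \bbr^k$ has radius $1$. Moreover, even granting a scaled version of Lemma \ref{stereoviat}(i) for the small sphere $\gam=\t\cap S^n$ (radius $\sin\psi$, center $u\neq 0$), the plane $\t\cap\rn$ is not its equatorial plane relative to $N$, and --- crucially --- a verbatim application inside $\t$ produces a weight expressed in the \emph{intrinsic} coordinate on $\t\cap\rn$ (distance to the foot of the perpendicular from $N$), not in terms of $|x|$, the distance of $x$ to the origin of $\rn$, which is what appears in the definition of $g$. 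Reconciling these two normalizations is exactly the computation the paper performs (the substitution $y=z\sin\psi$ together with the identity $|z+te_n|^2+1=(|y|^2+1)/\sin^2\psi$), and it is missing from your argument.

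To close the gap you must either (a) carry out that explicit computation, as the paper does: translate and rotate $\gam$ to a centered $(k-1)$-sphere $r\tilde\gam$ of radius $r=\sin\psi$ in a coordinate $k$-plane, apply Lemma \ref{stereoviat}(i) in dimension $k-1$ there, and then track both the argument of $f$ and the weight back through the translation by $u'$, the rotation $\rho_\psi$, and the dilation to land on $\t\cap\rn$ with the weight $(|x|^2+1)^{1-k}$; or (b) replace the appeal to Lemma \ref{stereoviat}(i) by the conformality of $\nu$ (its differential at $x$ is $\tfrac{2}{1+|x|^2}$ times a linear isometry), from which the displayed identity follows immediately for \emph{any} $(k-1)$-dimensional submanifold of $\rn$, in particular for $\t\cap\rn$. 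Either route is legitimate, but as written the proposal asserts the key identity without a valid proof. A minor further slip: when $0\notin\t$, the span $\span(\t)$ is $(k+1)$-dimensional and $\nu(\span(\t)\cap\rn)$ is a $k$-dimensional subsphere, not $\t\cap S^n$; the statement you need is $\nu(\t\cap\rn)=(\t\cap S^n)\setminus\{N\}$, which holds because the line through $N$ and any $x\in\t\cap\rn$ lies in the affine plane $\t$.
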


\begin{proof} STEP I. Every $k$-plane $\t \in \T$ is uniquely represented as
\[ \t\equiv \t (\t_0, u)= \t_0 +u, \qquad \t_0 \in G_{n+1, k}, \quad u \in \t_0^\perp,\]
where  $G_{n+1, k}$ is the Grassmann manifold of $k$-dimensional linear subspaces of $\bbr^{n+1}$ and $\t_0^\perp$ is the orthogonal complement of $\t_0$. Let  $\psi$ be the angle between $u$ and $e_{n+1}$. Then 
 \be \label {sVaS}
u=(\th \sin \psi +e_{n+1}\, \cos \psi) \cos \psi, \quad \th \in \sn \subset \rn, \quad  |u|=\cos \psi. \ee
 We also set
\[ r=\sin \psi, \qquad t=\cot \psi.\]
If $\z=\t \cap \rn$ and $G_{n, k-1}$ is  the Grassmann manifold of $(k-1)$-dimensional linear subspaces of $\rn$, then
\be \label {set} \z= \z_0 + v, \quad  \z_0 \in G_{n, k-1}, \quad v \in \z_0^\perp \cap \rn,\ee
where $\z_0^\perp$ is the orthogonal complement of $\z_0$ in $\bbr^{n+1}$. By (\ref{sVaS}),
\be \label {set1} |v|= \frac {|u|}{\sin \psi}=\frac{|u|}{\sqrt{1-|u|^2}}=\cot \psi=t. \ee
Let
\[\tilde \bbr^{k-1}= \bbr e_{n-k+1} \oplus \cdots \oplus \bbr e_{n-1} \]
be the coordinate $(k-1)$-subspace and choose a rotation
\[
\a= \left[\begin{array} {cc} \a_0 & 0\\
0 & 1
 \end{array} \right], \qquad \a_0 \in SO(n),\]
satisfying $ \a_0 (\tilde \bbr^{k-1})=\z_0$, $ \; \a_0 (e_{n})=\th $;
cf. (\ref{sVaS}). Then $\t= \a\t'$ for some $\t' \in \T$ satisfying
\be\label {latz1} \t'=  \t'_0 + u', \quad \t'_0 \in G_{n+1, k}, \quad u' \in (\t'_0)^\perp, \quad \t_0=\a\t'_0, \ee
 $u=\a u'$.  Similarly,
for $\z=\t \cap \rn$ we have
\[  \z=\a\z', \qquad \z'= \t' \cap \rn= \tilde \bbr^{k-1} + te_n.\]
Clearly,
\be\label {lasz} u'= (e_n \sin \psi +e_{n+1}\, \cos \psi) \cos \psi;\ee
cf. (\ref {sVaS}). Let
\[ \gam= \t \cap S^n,  \qquad \gam'= \t' \cap S^n\]
be the corresponding spherical cross-sections. In this notation, $\frS f$ reads
\be\label {lasz1}
(\frS f)(\a\gam')=\intl_{\gam'} (f\circ \a)(\eta') \, dS_{\gam'} (\eta').\ee

STEP II. Let $P_{\t'_0}\gam' $ be the orthogonal projection of $\gam'$ onto the subspace $\t'_0 \in G_{n+1, k}$, as in (\ref{latz1}). We  rotate $\t'_0$ about the subspace $\tilde \bbr^{k-1}= \bbr e_{n-k+1} \oplus \cdots \oplus \bbr e_{n-1}$, making $\t'_0$ coincident with the $k$-dimensional coordinate plane  $\tilde \bbr^{k}= \tilde \bbr^{k-1} \oplus  \bbr e_{n+1}$. This gives
\[
P_{\t'_0}\gam' \equiv \gam' - u' =\rho \tilde \gam,\]
where  $u'$ has the form (\ref{lasz}),  $\tilde \gam$ is the $(k-1)$-sphere of radius $r=\sin \psi$ in the ``vertical'' $k$-subspace $\tilde \bbr^{k}$,  and the rotation $\rho$ has the form
\[
 \rho=\left[ \begin{array} {cc} I_{n-1} &  0  \\ 0 & \rho_\psi
\end{array}\right], \qquad \rho_\psi=\left[ \begin{array} {cc} \sin \psi &  -\cos \psi  \\ \cos \psi & \sin \psi
\end{array}\right]. \]
Setting
\[S^{k-1}=S^n \cap \tilde \bbr^{k}\]
(cf. Figure 2 for $k=n$), from (\ref{lasz1}) we obtain
\[
(\frS f)(\a\gam')=(\frS f)(\a[u'+\rho \tilde \gam])=r^{k-1}\intl_{S^{k-1}} (f\circ \a)(u'+r\rho \sig ) \, dS(\sig) .\]

\vskip 0.1 truecm
\begin{figure}[h]
\centering
\includegraphics[scale=.7]{fig72b.eps}
\caption{$\gam =o'+\rho \tilde \gam, \; o'=u', \; r=\sin \psi$.} \label{stereo2}
\end{figure}
\vskip 0.1 truecm

Denote $f_\a (\cdot)= f  (\a(\cdot))$.
By Lemma \ref{stereoviat} (i), with $n$ replaced by $k-1$, we continue:
\be\label {lasz2}
(\frS f)(\a\gam')=(2r)^{k-1}\intl_{\tilde \bbr^{k-1} } f_\a(u'+r\rho \nu (y)) \,\frac{dy}{(|y|^2 +1)^{k-1}}.\ee
The argument of $f_\a$ can be transformed as follows:
\bea
&&u'+r\rho \nu (y)=u' + \left[ \begin{array} {cc} I_{n-1} &  0  \\ 0 & \rho_\psi
\end{array}\right] \frac{2y+(|y|^2-1)\,e_{n+1}}
{|y|^2+1}\, \sin \psi\nonumber\\
&&=(e_n \sin \psi \! +\!e_{n+1}\, \cos \psi) \cos \psi \nonumber\\
&&+ \frac{\sin \psi}{|y|^2+1} [2y \!-\!e_n (|y|^2\!-\!1)\,\cos \psi   \!  + \! e_{n+1}(|y|^2\!-\!1) \sin \psi] \!= \!\frac{A}{|y|^2+1},\nonumber\eea
where simple calculation yields
\[
A=2y \sin \psi+e_n \sin 2 \psi + e_{n+1} (|y|^2 + \cos 2\psi).\]
Hence,
\bea
&&(\frS f) (\gam)=(2\sin \psi)^{k-1} \nonumber\\
&&\times\intl_{\tilde \bbr^{k-1} }  \!\! \!f_\a
\left (\frac{2y \sin \psi+e_n \sin 2 \psi + e_{n+1} (|y|^2 \!+\! \cos 2\psi)}
{|y|^2+1}\right )\,\frac{dy}{(|y|^2 \!+\!1)^{k-1}}. \nonumber\eea
Changing variable $y=z  \sin \psi$ and setting $t=\cot \psi$, we  write this expression  as
\bea
&&2^{k-1}\!\!\intl_{\tilde \bbr^{k-1} } \!\!\! f_\a\left (\frac{2(z\!+\!te_n)\!+\!(|z\!+\!te_n|^2 \!-\!1) \, e_{n+1}}{|z+te_n|^2 +1}\right)\, \frac{dz}{(|z\!+\!te_n|^2 \!+\!1)^{k-1}}\nonumber\\
&& =\intl_{\tilde \bbr^{k-1} } g(\a(z+te_n)) \, dz, \quad g(x)=\frac{2^{k-1} (f\circ \nu)(x)}{(|x|^2+1)^{k-1}}. \nonumber\eea
This is exactly the Radon-John transform
\[
(R_{k-1}g)(\a \tilde \bbr^{k-1} +t\a e_n)=(R_{k-1}g)(\z_0 +v)=(R_{k-1}g)(\z), \]
where  (cf. (\ref{set1}))
\be\label {nsd} \z_0 =\t_0 \cap\rn, \qquad v=t\th=\frac{|u|\th}{\sqrt{1-|u|^2}}, \qquad  \z=\t \cap \rn.\ee

\end{proof}

\section{Existence Conditions,  Zonal Functions, Support Theorems, and Inversion }

Theorem \ref{th1} makes it possible to study the spherical slice transform $\frS $ using known properties of the Radon-John transform $R_{k-1}$. 
Below we consider several corollaries of this theorem.

\subsection{Existence Conditions}

 We recall 
Theorem 3.2 from \cite{Ru13b}, which characterizes convergence of the Radon-John transform (see also Theorem 4.24 in \cite {Ru15} for $k=n$). In our notation, this theorem reads as follows.
\begin{theorem} \label{7zvrt}  If
\be\label{for10zk} \intl_{|x|>a} \frac{|g(x)|}{|x|^{n-k+1}}\, dx
<\infty, \quad \forall \, a>0,\ee
 then $(R_{k-1}g)(\tau)$ is finite for almost all $\t\!\in \!\T$.
If  $g$ is nonnegative, radial,  and (\ref{for10zk}) fails, then $(R_{k-1}g)(\tau)\!=\!\infty$ for every $\t$ in $\T$.
\end{theorem}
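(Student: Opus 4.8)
The plan is to derive both assertions from Tonelli's theorem plus one computation on the Grassmannian. Since absolute convergence of $R_{k-1}g$ only sees $|g|$, I would first replace $g$ by $|g|\ge 0$ and argue with the nonnegative transform $R_{k-1}|g|$, so that all interchanges below are automatically legitimate. I would also read the statement on the affine Grassmannian $G(n,k-1)$ of $(k-1)$-planes $\zeta$ in $\rn$: by the correspondence constructed in the proof of Theorem \ref{th1}, the map $\tau\mapsto\zeta=\tau\cap\rn$ is a diffeomorphism of $\T$ onto $G(n,k-1)$, so it suffices to prove the two claims for $\zeta\in G(n,k-1)$. I will use the fibered parametrization $\zeta=\zeta_0+v$, $\zeta_0\in G_{n,k-1}$, $v\in\zeta_0^\perp$, under which the invariant measure factors as $d\zeta=d\zeta_0\,dv$ (normalized invariant measure on the linear Grassmannian times Lebesgue measure on the $(n-k+1)$-dimensional normal space) and $(R_{k-1}g)(\zeta_0+v)=\intl_{\zeta_0}g(u+v)\,d_{\zeta_0}u$.

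\emph{Sufficiency.} For $0<a<b<\infty$ set $\P_{a,b}=\{\zeta:a<|\zeta|<b\}$ and $m_{a,b}(x)=\intl_{G_{n,k-1}}\chi_{(a,b)}(\dist(x,\zeta_0))\,d\zeta_0$. Inserting the fibered parametrization and, inside each fiber $\zeta_0^\perp$, making the orthogonal substitution $x=u+v$ (so that $|v|=\dist(x,\zeta_0)$), Tonelli gives
\[
\intl_{\P_{a,b}}(R_{k-1}|g|)(\zeta)\,d\zeta=\intl_{\rn}|g(x)|\,m_{a,b}(x)\,dx .
\]
Now $\dist(x,\zeta_0)\le|x|$, so $m_{a,b}(x)=0$ once $|x|\le a$; and for $|x|>a$, using $\dist(x,\zeta_0)=|x|\,\dist(x/|x|,\zeta_0)$ together with the fact that $\zeta_0\mapsto\dist(e,\zeta_0)$ pushes $d\zeta_0$ forward (for any fixed unit $e$) to the measure with density $\propto s^{n-k}(1-s^2)^{(k-3)/2}$ on $(0,1)$, an elementary estimate of the resulting incomplete Beta integral yields $m_{a,b}(x)\le C(a,b)\,|x|^{-(n-k+1)}$. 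Combined with the displayed identity and hypothesis \eqref{for10zk}, this shows $\intl_{\P_{a,b}}(R_{k-1}|g|)(\zeta)\,d\zeta<\infty$, hence $(R_{k-1}|g|)(\zeta)<\infty$ for a.e.\ $\zeta\in\P_{a,b}$. Since the sets $\P_{1/j,j}$, $j\to\infty$, exhaust $\{\zeta:|\zeta|>0\}$ and $\{\zeta:|\zeta|=0\}$ is $d\zeta$-null, we conclude that $(R_{k-1}g)(\zeta)$ is finite for a.e.\ $\zeta$.

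\emph{Necessity.} Let $g\ge0$ be radial, $g(x)=g_0(|x|)$, and let $\zeta\in G(n,k-1)$ with $\dist(0,\zeta)=d\ge0$. Realizing $\zeta$ as the image of $\bbr^{k-1}$ under an affine isometry and substituting $r=\sqrt{d^2+|y|^2}$ gives $(R_{k-1}g)(\zeta)=c_{k-1}\intl_d^\infty g_0(r)\,r\,(r^2-d^2)^{(k-3)/2}\,dr$, whereas $\intl_{|x|>a}|g(x)|\,|x|^{-(n-k+1)}\,dx=|\sn|\intl_a^\infty g_0(r)\,r^{k-2}\,dr$. If \eqref{for10zk} fails then the latter integral diverges for every $a>0$; splitting the integral for $(R_{k-1}g)(\zeta)$ at $r=2d$ (at $r=1$ if $d=0$) and using the elementary bound $(r^2-d^2)^{(k-3)/2}\ge c_k\,r^{k-3}$ for $r\ge 2d$ (valid for every $k\ge2$), we obtain $(R_{k-1}g)(\zeta)\ge c\,\intl_{2d}^\infty g_0(r)\,r^{k-2}\,dr=\infty$, for every $\zeta$, i.e.\ for every $\tau\in\T$.

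\emph{Main obstacle.} The essential point is the decay estimate for $m_{a,b}$ in the sufficiency part, i.e.\ that $\dist(x,\zeta_0)/|x|$ is distributed on $G_{n,k-1}$ with density $\asymp s^{n-k}$ near $s=0$: this is exactly where the codimension $n-k+1$ of a $(k-1)$-plane in $\rn$ enters, and it pins down the exponent in \eqref{for10zk}. One can extract it either from the classical Beta law for $|P_{\zeta_0}e|^2$ or from a Blaschke--Petkantschin (coarea) identity on the Grassmannian. Everything else — the Tonelli interchange and the one-dimensional comparison in the converse — is routine bookkeeping.
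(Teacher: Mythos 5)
You should note first that the paper does not prove Theorem \ref{7zvrt} at all: it is imported verbatim from \cite{Ru13b}, so there is no internal proof to compare against, and your argument is to be judged against the standard one. Your sufficiency half is that standard argument and is correct. Integrating $R_{k-1}|g|$ over a shell of planes and applying Tonelli does reduce everything to the function $m_{a,b}=$ (dual transform of $\chi_{(a,b)}(|\cdot|)$); the Beta law for $|P_{\zeta_0^\perp}e|^2$ with $\dim\zeta_0^\perp=n-k+1$ gives exactly the density $\propto s^{n-k}(1-s^2)^{(k-3)/2}$ (integrable at $s=1$ even for $k=2$), whence $m_{a,b}\equiv 0$ on $\{|x|\le a\}$ and $m_{a,b}(x)\le C(a,b)|x|^{-(n-k+1)}$ everywhere; this is indeed the step that pins down the exponent in \eqref{for10zk}. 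The exhaustion by $\P_{1/j,j}$ and the transfer from $G(n,k-1)$ to $\T$ via the diffeomorphism $\t\mapsto\t\cap\rn$ (null sets to null sets) are fine.

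The necessity half contains one step that is not valid as written: from ``\eqref{for10zk} fails'' you assert that $\int_a^\infty g_0(r)\,r^{k-2}\,dr=\infty$ for \emph{every} $a>0$. Failure of \eqref{for10zk} only yields divergence for \emph{some} $a_0$, hence for all $a\le a_0$; your final estimate, however, needs divergence of the tail $\int_{2d}^\infty g_0(r)r^{k-2}\,dr$ for arbitrarily large $d$. If $g_0$ has a non-locally-integrable singularity at a finite radius and vanishes beyond it --- say $g_0(r)=|r-5|^{-1}\chi_{[4,6]}(r)$ --- then \eqref{for10zk} fails, yet $(R_{k-1}g)(\z)=0$ for every plane at distance greater than $6$ from the origin, so the conclusion ``$=\infty$ for every $\t$'' is false for such $g$. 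In other words, the second assertion of the theorem tacitly requires $g$ to be locally integrable away from the origin (a standing assumption in \cite{Ru13b}); under that assumption the repair is one line, since finiteness of $\int_{a_0<|x|<A}$ forces every tail $\int_{|x|>A}$ to diverge. You should make this explicit, because it is precisely where ``divergent for some $a$'' is upgraded to ``infinite for every $\t$''. The remaining ingredients of your necessity argument --- the Abel-type formula $\sig_{k-2}\intl_d^\infty g_0(r)(r^2-d^2)^{(k-3)/2}r\,dr$ for radial $g$ and the bound $(r^2-d^2)^{(k-3)/2}\ge c_k\,r^{k-3}$ for $r\ge 2d$, including the case $k=2$ --- are correct.
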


The next statement extends  Theorem 7.8 from \cite[]{Ru15} from $k=n$ to all $2\le k \le n$.

\begin{proposition} \label{cor1} Let $\eta \in S^n$, $\t \in \T$, $2\le k \le n$. 
If
\be \label {stereogr1r50} \intl_{\eta_{n+1}>1-\e}
\frac{|f(\eta)|}{(1-\eta_{n+1})^{(n+1-k)/2}}\,dS(\eta)<\infty, \quad \forall \,0<\e\le 2, \ee
 then $(\frS f)(\t \cap S^n)$ is finite for almost all $\t \in \T$.
If $f$ is nonnegative, zonal, and (\ref{stereogr1r50}) fails, then  $(\frS f)(\t \cap S^n)= \infty$ for all $\t \in \T$.
 If $f$ is continuous on $S^{n}\setminus \{N\}$,  then  $(\frS f)(\t \cap S^n)$ is finite for each $\t \in \T$ provided that 
\be\label  {ConveQ}
\sup\limits_{\eta \in S^n}(1-\eta_{n+1})^{\mu} |f(\eta)| <\infty\quad \text{for some}\quad  \mu < \frac{k-1}{2},\ee
where the bound $(k-1)/2$ is sharp.
\end{proposition}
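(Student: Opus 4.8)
The plan is to deduce Proposition~\ref{cor1} from Theorem~\ref{th1} and Theorem~\ref{7zvrt} by translating the stated conditions on $f$ into the corresponding conditions on $g(x)=2^{k-1}(f\circ\nu)(x)/(|x|^2+1)^{k-1}$ under the stereographic substitution. First I would record, from Theorem~\ref{th1}, that $(\frS f)(\t\cap S^n)=(R_{k-1}g)(\t\cap\rn)$ whenever either side exists, and that $\t\mapsto\t\cap\rn$ is a bijection from $\T$ onto $G(n,k-1)$ carrying the natural measure on $\T$ to (an equivalent of) the natural measure on $G(n,k-1)$, so ``almost all $\t\in\T$'' matches ``almost all $\z\in G(n,k-1)$.'' Then the whole proposition reduces to checking that the integrability hypothesis (\ref{for10zk}) for $g$, namely $\intl_{|x|>a}|g(x)|\,|x|^{k-1-n}\,dx<\infty$ for all $a>0$, is equivalent to (\ref{stereogr1r50}), and likewise that the radial/zonal correspondence is exact.

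The main computation is the change of variables. Using $|x|=\cot(\vp/2)$ and (\ref{ll3567g}), i.e. $|x|^2+1=2/(1-\eta_{n+1})$, one has $|g(x)|=2^{k-1}|f(\eta)|(1-\eta_{n+1})^{k-1}/2^{k-1}=|f(\eta)|(1-\eta_{n+1})^{k-1}$. The region $|x|>a$ corresponds to $\eta_{n+1}>1-\e$ with $\e=2/(a^2+1)\in(0,2)$, and as $a$ ranges over $(0,\infty)$, $\e$ ranges over $(0,2)$, matching the quantifier in (\ref{stereogr1r50}). Applying Lemma~\ref{stereoviat}(ii) to the nonnegative function $x\mapsto|g(x)|\,|x|^{k-1-n}$ (valid since $x\mapsto\nu^{-1}$ pushes forward $dx$ to $d\eta/(1-\eta_{n+1})^n$), and using $|x|^{k-1-n}=(\cot(\vp/2))^{k-1-n}$ together with $\cot^2(\vp/2)=(1+\eta_{n+1})/(1-\eta_{n+1})$, the exponents collapse: one gets
\be\label{planconv}
\intl_{|x|>a}\frac{|g(x)|}{|x|^{n-k+1}}\,dx=\intl_{\eta_{n+1}>1-\e}\frac{|f(\eta)|\,(1+\eta_{n+1})^{(k-1-n)/2}}{(1-\eta_{n+1})^{(n+1-k)/2}}\,dS(\eta),
\ee
and since $\eta_{n+1}\to1$ on the region of integration the factor $(1+\eta_{n+1})^{(k-1-n)/2}$ is bounded above and below by positive constants, so (\ref{for10zk}) for $g$ holds if and only if (\ref{stereogr1r50}) holds for $f$. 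This gives the first assertion via the finiteness half of Theorem~\ref{7zvrt}, and the second assertion (nonnegative zonal $f$, failure of (\ref{stereogr1r50})) via the second half of Theorem~\ref{7zvrt}, once one notes that $f$ zonal $\iff$ $g$ radial (the map $\nu$ intertwines rotations about $e_{n+1}$ with rotations of $\rn$, and the weight $(|x|^2+1)^{-(k-1)}$ is radial).

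For the last assertion I would argue directly on the sphere. If $f$ is continuous on $S^n\setminus\{N\}$ and $(1-\eta_{n+1})^\mu|f(\eta)|\le C$ for some $\mu<(k-1)/2$, then on $\t\cap S^n$ the integrand in (\ref{Sli}) is bounded by $C(1-\eta_{n+1})^{-\mu}$; since $\t\in\T$ passes through $N$ and meets $\rn$, the cross-section $\t\cap S^n$ is a $(k-1)$-sphere through $N$, and near $N$ the coordinate $1-\eta_{n+1}$ behaves on this subsphere like (a constant times) the squared geodesic distance to $N$, so the surface integral $\intl_{\t\cap S^n}(1-\eta_{n+1})^{-\mu}\,dS$ converges precisely when $2\mu<k-1$, i.e. $\mu<(k-1)/2$; away from $N$ the integrand is continuous on a compact set, hence finite. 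This handles every $\t\in\T$. Sharpness is shown by the zonal example $f(\eta)=(1-\eta_{n+1})^{-(k-1)/2}$ (or with an extra logarithmic factor): then $g(x)\asymp|x|^{k-1}(|x|^2+1)^{-(k-1)}$, condition (\ref{for10zk}) reduces to $\intl_{|x|>a}|x|^{k-1-n}\,dx=\infty$ when... — here I must be careful: $k-1-n\le-1$ always, so this particular integral in fact converges, and the honest sharp example is rather the borderline $\mu=(k-1)/2$ case $f(\eta)=(1-\eta_{n+1})^{-(k-1)/2}$, for which the pointwise integral over a cross-section through $N$ diverges logarithmically; I would exhibit this explicitly using the cross-section geometry above. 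The main obstacle, and the step deserving the most care, is the two geometric bookkeeping facts used implicitly: (a) that the bijection $\t\leftrightarrow\t\cap\rn$ between $\T$ and $G(n,k-1)$ is measure-compatible so that ``a.e.'' transfers, which I would justify from the explicit parametrization in the proof of Theorem~\ref{th1} (the maps $u\mapsto v$, $\t_0\mapsto\z_0$ are diffeomorphisms with nonvanishing Jacobian away from a null set); and (b) the local comparison $1-\eta_{n+1}\asymp(\text{geodesic distance to }N)^2$ on a subsphere through $N$, needed for both the positive direction and sharpness in the continuous case.
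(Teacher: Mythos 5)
Your treatment of the first two assertions follows essentially the same route as the paper: reduce to Theorem \ref{7zvrt} via Theorem \ref{th1}, and check that the stereographic substitution converts (\ref{for10zk}) for $g=2^{k-1}(f\circ\nu)/(|x|^2+1)^{k-1}$ into (\ref{stereogr1r50}). The paper does this by noting $|x|^{n-k+1}\asymp(|x|^2+1)^{(n-k+1)/2}$ on $|x|>a$ and then applying Lemma \ref{stereoviat}(ii) with (\ref{ll3567g}); your version of the identity, with the harmless bounded factor $(1+\eta_{n+1})^{(k-1-n)/2}$, is the same computation, and your remarks on the zonal/radial correspondence and on the measure compatibility of $\t\mapsto\t\cap\rn$ make explicit what the paper leaves implicit. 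Where you genuinely diverge is the continuous case: the paper again routes everything through the integral condition, rewriting it in spherical coordinates near $N$ so that (\ref{ConveQ}) forces convergence of (\ref{stereogr1r50}), whereas you argue directly on each individual cross-section, using that $1-\eta_{n+1}=\tfrac12|\eta-N|^2$ so that on the $(k-1)$-sphere $\t\cap S^n$ through $N$ the weight is comparable to the squared intrinsic distance to $N$. Your direct argument is the cleaner of the two for this part: it yields finiteness for \emph{every} $\t\in\T$ immediately, while the integral-condition route a priori gives only an almost-everywhere statement unless one additionally invokes domination of $|f|$ by the zonal majorant $C(1-\eta_{n+1})^{-\mu}$ and the all-or-nothing dichotomy for nonnegative zonal functions.

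One correction to your sharpness discussion. For $f(\eta)=(1-\eta_{n+1})^{-(k-1)/2}$ one has $g(x)=2^{(k-1)/2}(|x|^2+1)^{-(k-1)/2}$, so the integrand in (\ref{for10zk}) is $\asymp |x|^{-(k-1)}\cdot|x|^{-(n-k+1)}=|x|^{-n}$ at infinity and the integral diverges logarithmically; thus (\ref{for10zk}) \emph{does} fail, and the route you abandoned in fact works (Theorem \ref{7zvrt} then gives $\frS f\equiv\infty$ since $g$ is nonnegative and radial). Your intermediate claim that $\int_{|x|>a}|x|^{k-1-n}\,dx$ converges is also incorrect — the volume element contributes $r^{n-1}$, leaving $\int_a^\infty r^{k-2}\,dr=\infty$ — but the slip is immaterial, since the example you settle on, with the logarithmic divergence of $\int_{\t\cap S^n}(1-\eta_{n+1})^{-(k-1)/2}\,dS$ at $N$, correctly establishes that the exponent $(k-1)/2$ cannot be attained.
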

\begin{proof} By (\ref{Slik1}), the  integral in (\ref{for10zk}) can be written as
\[
 \intl_{|x|>a} \left (\frac{2}{|x|^2+1}\right )^{k-1} \frac{|(f\circ \nu)(x)|}{|x|^{n-k+1}}\,  dx.\]
 Convergence of the latter is equivalent to convergence of the integral
\[
 I=\intl_{|x|>a} \frac{|(f\circ \nu)(x)|\, dx}{(|x|^2+1)^{(n+k-1)/2}}.\]
Hence, by Lemma \ref{stereoviat} and (\ref{ll3567g}),  (\ref{for10zk}) is equivalent to (\ref{stereogr1r50}).
  The result for continuous functions easily follows if we write $I$ as
  \[I=c \intl_{\eta_{n+1}>1-\e} f(\eta) (1-\eta_{n+1})^{(k-n-1)/2} d\eta\]
  and switch to spherical coordinates; cf. \cite [formula (1.12.10)] {Ru15}.  
\end{proof}

\subsection{Zonal Functions}

The next Proposition is a generalization of Theorem 7.11 from  \cite {Ru15} for $k=n$.
\begin{proposition} \label{cor2}  Let $2\le k \le n$, and let $f$ be a zonal function satisfying (\ref{stereogr1r50}). We write $\eta \in S^n$  in spherical coordinates as
\be\label {pqw} \eta =\om\, \sin \vp + e_{n+1}\, \cos \, \vp, \qquad \om \in S^{n-1}, \qquad 0<\vp\le \pi,\ee
and set $ f(\eta)=f_0 (\cot \vp/2)$; (cf. Figure 1). Then
\[  (\frS f)(\t \cap S^n)=F_0(t), \qquad t=\frac{|\t|}{\sqrt {1-|\t|^2}}, \footnote{ We recall (see Notation) that $|\t|$ stands for the smallest 
Euclidean distance of any point of the affine plane $\t$ to the origin $(0, \ldots, 0)$.}    \]
where
\be F_0(t)=2^{k-1} \sig_{k-2}\intl^\infty_t \frac{f_0 (s)}{(1+s^2)^{k-1}}  \, (s^2 -t ^2)^{(k-3)/2}\, s\, ds, \ee
$\sig_{k-2}$ being the area of the $(k-2)$-dimensional unit sphere.
\end{proposition}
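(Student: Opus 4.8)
The plan is to specialize Theorem \ref{th1} to zonal $f$ and then compute the resulting Radon--John transform of a radial function on $\rn$. By Theorem \ref{th1}, $(\frS f)(\t \cap S^n) = (R_{k-1}g)(\z)$ with $\z = \t\cap\rn$ and $g(x) = 2^{k-1}(f\circ\nu)(x)(|x|^2+1)^{-(k-1)}$. First I would check that $g$ is radial on $\rn$: since $f$ is zonal, $(f\circ\nu)(x)$ depends only on $|x|$ through the relation $|x| = \cot(\vp/2)$ from Figure 1, so $g(x) = g_0(|x|)$ with $g_0(s) = 2^{k-1} f_0(s)(1+s^2)^{-(k-1)}$, where $f_0$ is as in the statement. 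The integrability hypothesis (\ref{stereogr1r50}) guarantees, via the equivalence established in Proposition \ref{cor1} (i.e. via condition (\ref{for10zk})), that $(R_{k-1}g)(\z)$ is finite for almost all $\z$, so the computation below is legitimate.

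Next I would record the distance relation between $\t$ and $\z$. From (\ref{set1}) and (\ref{nsd}) in the proof of Theorem \ref{th1}, the distance from the origin in $\rn$ to $\z = \t\cap\rn$ equals $t = |u|/\sqrt{1-|u|^2}$, where $|u| = |\t|$ is the distance from the origin of $\rnn$ to $\t$; hence $t = |\t|/\sqrt{1-|\t|^2}$, which is exactly the parameter appearing in the statement. So it remains to show that for a radial function $g(x) = g_0(|x|)$ on $\rn$, the Radon--John transform $(R_{k-1}g)(\z)$ depends only on $\dist(0,\z) = t$ and is given by the claimed Abel-type integral. This is a standard computation: parametrize $\z$ as $\z_0 + v$ with $|v| = t$ and $v\perp\z_0$, so a point of $\z$ is $v + y$ with $y\in\z_0 \cong \bbr^{k-1}$, and $|v+y|^2 = t^2 + |y|^2$. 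Then
\[
(R_{k-1}g)(\z) = \intl_{\bbr^{k-1}} g_0\!\left(\sqrt{t^2+|y|^2}\right) dy = \sig_{k-2}\intl_0^\infty g_0\!\left(\sqrt{t^2+\rho^2}\right)\rho^{k-2}\,d\rho,
\]
passing to polar coordinates in $\bbr^{k-1}$ (with $\sig_{k-2}$ the area of $S^{k-2}$). Substituting $s = \sqrt{t^2+\rho^2}$, so $\rho\,d\rho = s\,ds$ and $\rho^{k-3} = (s^2-t^2)^{(k-3)/2}$, turns this into $\sig_{k-2}\int_t^\infty g_0(s)(s^2-t^2)^{(k-3)/2}\,s\,ds$. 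Inserting $g_0(s) = 2^{k-1}f_0(s)(1+s^2)^{-(k-1)}$ gives precisely $F_0(t)$ as in the displayed formula.

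The argument is essentially a bookkeeping exercise once Theorem \ref{th1} is in hand; the only genuine point requiring care is the \emph{identification of the radial variable}: one must confirm that $\dist(0, \t\cap\rn)$, expressed in terms of $|\t|$, matches the $t = \cot\psi = |u|/\sqrt{1-|u|^2}$ bookkeeping from the proof of Theorem \ref{th1}, and that the zonality of $f$ on $S^n$ translates into genuine radiality of $g$ on $\rn$ (this uses that the stereographic projection $\nu$ intertwines rotations about the $e_{n+1}$-axis with rotations of $\rn$ about the origin, so $f\circ\nu$ is $SO(n)$-invariant). After that, the substitution $s=\sqrt{t^2+\rho^2}$ and polar coordinates are routine, and the $k=2$ case (where $(k-3)/2 = -1/2$ and $\sig_0 = 2$) is included automatically. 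I would also note in passing that the case distinction about whether $t$ can be computed from $|\t|$ with a definite sign is moot because $R_{k-1}g$ depends only on $t^2$, consistent with the geometry.
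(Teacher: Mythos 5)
Your proposal is correct and follows essentially the same route as the paper: reduce to Theorem \ref{th1}, observe that zonality of $f$ turns $g$ into a radial function, identify $t=|\t|/\sqrt{1-|\t|^2}$ via the $\cot\psi$ bookkeeping, and evaluate $R_{k-1}$ on a radial function as an Abel-type integral. The only difference is that the paper quotes this last formula from \cite[Formula (2.6)]{Ru04b} whereas you rederive it by polar coordinates, which is the standard computation behind that citation.
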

\begin{proof}
The statement follows from the similar fact for the Radon-John transform, according to equalities
\[
(\frS f)(\t \cap S^n)=(R_{k-1} g)(\t \cap \rn), \qquad g(x)=\frac{2^{k-1} (f\circ \nu)(x)}{(|x|^2+1)^{k-1}};\]
see (\ref{Slik1}). Because $f$ is zonal on $S^n$,  $f\circ \nu$ and  $g$  are radial functions on $\rn$. We set
$|x|=s$ and recall  that $|x| =\cot \vp/2$; see (\ref{ll3567g}), (\ref {pqw}). Then
\[
 (f\circ \nu)(x)= f \left ( \frac{2s e_n + (s^2\!-\!1)e_{n+1}}{s^2 +1} \right )= f_0 (s), \quad  g(x)=  \frac{2^{k-1} f_0 (s)}{(s^2+1)^{k-1}}. \]
If 
\[\t \cap \rn =\z_0 +v, \qquad \z_0 \in G_{n, k-1}, \qquad v\in \z_0^\perp \cap \rn,\]
 then, by \cite [Formula (2.6)]{Ru04b},
\[
(R_{k-1} g)(\t \cap \rn)\equiv (R_{k-1} g)(\z_0 +v)= \sig_{k-2}\intl^\infty_t \tilde g (s) \, (s^2 -t^2)^{(k-3)/2}\, s\, ds, \]
where
\[t=|v|=\frac{|\t|}{\sqrt {1-|\t|^2}}, \qquad   \tilde g (s)=\frac{2^{k-1} f_0 (s)}{(s^2+1)^{k-1}}. \]
cf. (\ref{nsd}).  This gives the result.
\end{proof}

\subsection {Support Theorems}

Our next aim is to establish connection between supports of $f$ and $\frS f$. We set  $b\in (-1,1)$,  $b_*=\sqrt {(1+b)/2}$,
 \[\Om_b=\{\eta \in S^n: \, \eta_{n+1} >b\}, \qquad    \tilde \Om_b=\{\t \in \T: |\t| > b_*\}.\]

\begin{theorem} \label{cor3}
  Let $2\le k\le n$ and suppose that $f$ satisfies (\ref{stereogr1r50}).
\vskip 0.2 truecm

\noindent $\rm (i)$  If  $f(\eta)=0$ for almost all $\eta \in \Om_b$, then $(\frS f)(\t \cap S^n) =0$ for almost all $\t \in \tilde \Om_b$.

\vskip 0.2 truecm

\noindent $\rm (ii)$ Conversely, suppose additionally that $f$ is continuous on $S^{n}\setminus \{N\}$ and
\be\label  {Conve}
\sup\limits_{\eta \in S^n}(1-\eta_{n+1})^{k-1-m/2} |f(\eta)| <\infty\quad \text{for all}\quad  m>0.\ee
Then the following implication holds:
\be\label{kzUYu1}
(\frS f)(\t \cap S^n) =0 \; \forall \,\t \in \tilde \Om_b \; \Longrightarrow \;f(\eta)=0 \; \forall \, \eta \in \Om_b.\ee
If $k<n$, then (\ref{kzUYu1}) holds under the less restrictive assumption   (\ref{ConveQ}), which is sharp.
%\be\label  {ConveQ}
%\sup\limits_{\eta \in S^n}(1-\eta_{n+1})^{\mu} |f(\eta)| <\infty\quad \text{for some}\quad  \mu < \frac{k-1}{2}.\ee
\end{theorem}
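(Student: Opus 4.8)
The plan is to transfer the statement to a support theorem for the Radon--John transform $R_{k-1}$ via Theorem \ref{th1} and Lemma \ref{stereoviat}, after fixing a dictionary between the regions involved. First I would record, using (\ref{ll3567g}), that for $\eta=\nu(x)$ one has $1-\eta_{n+1}=2/(|x|^2+1)$, whence $\eta\in\Om_b$ if and only if $|x|>a$ with $a=\sqrt{(1+b)/(1-b)}$; and that, writing $\z=\t\cap\rn$ and using (\ref{set1}), the Euclidean distance from the origin to $\z$ equals $|\t|/\sqrt{1-|\t|^2}$, so that the elementary equivalence $2|\t|^2>1+b \Leftrightarrow |\t|^2/(1-|\t|^2)>(1+b)/(1-b)$ gives $\t\in\tilde\Om_b$ if and only if $\dist(0,\z)>a$. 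Since by Lemma \ref{stereoviat} the measures $dS$ on $S^n$ and $dx$ on $\rn$ correspond under $\nu$ with a continuous strictly positive density, $f=0$ a.e. on $\Om_b$ is equivalent to $g=0$ a.e. on $\{|x|>a\}$, where $g$ is as in (\ref{Slik1}); moreover (\ref{stereogr1r50}) is equivalent to (\ref{for10zk}) for $g$, as already seen in the proof of Proposition \ref{cor1}.

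Part (i) then reduces to the easy half of the Radon--John support theorem: if $g=0$ a.e. on $\{|x|>a\}$, then each $(k-1)$-plane $\z$ with $\dist(0,\z)>a$ is contained in $\{|x|>a\}$, and by a Fubini argument the null set $\{g\neq0\}\cap\{|x|>a\}$ meets almost every such $\z$ in a set of zero $(k-1)$-measure, so that $(R_{k-1}g)(\z)=0$ for almost all $\z$ with $\dist(0,\z)>a$; by Theorem \ref{th1} this says $(\frS f)(\t\cap S^n)=0$ for almost all $\t\in\tilde\Om_b$.

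Part (ii) is the substantive direction, and here the hypotheses on $f$ are tailored to the hypotheses of the Appendix support theorem for $g$. Continuity of $f$ on $S^n\setminus\{N\}$ gives continuity of $g$ on $\rn$. Substituting $1-\eta_{n+1}=2/(|x|^2+1)$ into (\ref{Conve}) and using the formula for $g$ yields $|g(x)|\le C_m(1+|x|^2)^{-m/2}$ for every $m>0$, i.e. $g$ is rapidly decreasing, while $(\frS f)(\t\cap S^n)=0$ for all $\t\in\tilde\Om_b$ becomes $(R_{k-1}g)(\z)=0$ for all $\z$ with $\dist(0,\z)>a$; the classical support theorem then forces $g=0$ on $\{|x|>a\}$, i.e. $f=0$ on $\Om_b$. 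If $k<n$, the planes $\z$ have codimension at least $2$ in $\rn$, and the weaker bound (\ref{ConveQ}) translates into $|g(x)|\le C(1+|x|^2)^{-(k-1-\mu)}$ with $k-1-\mu>(k-1)/2$, which is exactly the convergence condition (\ref{for10zk}); in this regime one applies instead the improved Support Theorem \ref{hobfu}, whose rapid-decrease assumption has been dropped precisely when the codimension is at least $2$.

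The main obstacle is not in this reduction, which is bookkeeping, but in the Radon--John support theorem itself in the sharp form needed for $k<n$; that is the content of Support Theorem \ref{hobfu} in the Appendix, which I take as given. The only point demanding care inside the present argument is the matching of exponents --- verifying that (\ref{Conve}) is equivalent to rapid decrease of $g$ and that (\ref{ConveQ}) is equivalent to (\ref{for10zk}) for $g$ --- the latter equivalence being also what makes the threshold $\mu<(k-1)/2$ sharp, since at $\mu=(k-1)/2$ a nonnegative zonal $f$ can already render $\frS f$ infinite, by Theorem \ref{7zvrt} and Proposition \ref{cor1}.
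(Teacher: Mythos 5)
Your proposal is correct and follows essentially the same route as the paper: reduce via Theorem \ref{th1} to a support theorem for $R_{k-1}$, apply Helgason's classical support theorem under the rapid-decrease hypothesis (\ref{Conve}), and invoke the improved Support Theorem \ref{hobfu} (with $\lam=2(k-1-\mu)>k-1$) to relax to (\ref{ConveQ}) when $k<n$. The only differences are that you spell out the region dictionary and the easy direction (i) in more detail than the paper does, which is harmless.
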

\begin{proof} 
The result follows from the similar one for the Radon-John transform in accordance with (\ref{Slik1}).
It is known \cite[p. 33]{H11} that if $g\in C(\rn)$ and $|x|^m g(x)$ is bounded on $\rn$ for all $m>0$, then, for any fixed  $a>0$, the 
equality $(R_{k-1} g)(\z)=0 \; \forall \,|\z|>a$ implies $g(x)=0\; \forall \,|x|>a$. By (\ref{Slik1})  and (\ref{ll3567g}),
\[|x|^m  g(x)=\frac{2^{k-1}\, |x|^m \, (f\circ \nu)(x)}{(|x|^2+1)^{k-1}}= f(\eta) (1\!-\!\eta_{n+1})^{k-1-m/2} (1\!+\!\eta_{n+1})^{m/2}.\]
 Suppose $|x|\equiv |\nu (\eta)|>a>0$, which is equivalent
to 
\[\frac {2}{1-\eta_{n+1}} > a^2 +1 \;\,  \text{\rm or} \;\, \eta_{n+1} >b>-1, \; \text{\rm where} \; \, b=\frac{a^2 -1}{a^2 +1}<1.\]
 Hence, if  we set $ b_* =\sqrt {(1+b)/2}$, $ \;\tilde \Om_b=\{\t \in \T: |\t| > b_*\}$,
 and assume $ (\frS f)(\t \cap S^n) =0\; \forall \,\t \in \tilde \Om_b$,  we get  $(R_{k-1} g)(\z)=0 \; \forall \,|\z|>a$. The latter implies $g(x)=0\; \forall \,|x|>a$ or  $f(\eta)=0\; \forall \,\eta \in \Om_b$.
  The statement (i) can be proved similarly.   
  If $k$ is strictly less than $n$ then, by Theorem  \ref{hobfu}, the number $m$ in the above reasoning can be replaced by  $\lam > k-1$. Setting $\mu= k-1-\lam/2$, we obtain the second statement in (ii).
\end{proof}

\begin{remark} In the case $k=n$,  the  condition  (\ref{Conve}) 
 cannot be dropped; cf. \cite[Remark 2.9 on p. 15]{H11}, \cite [Remark 4.119]{Ru15}. However, the continuity requirement can be substituted as follows.

\begin{theorem} \label {pro} {\rm (cf. \cite [Theorem 7.13] {Ru15})} Let $k=n$. If
\[ \intl_{\Om_b}  (1-\eta_{n+1})^{-1-m/2} \,|f (\eta)|\, dS(\eta)<\infty  \quad   \text{\rm for all}\; \, m\in \{1,2, \ldots \}\]
 and $(\frS f)(\t \cap S^n) =0$ for almost all $\t \in \tilde \Om_b$, then $f(\eta)=0$ for almost all $\eta \in \Om_b$.
\end{theorem}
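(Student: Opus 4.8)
\textbf{Proof proposal for Theorem \ref{pro}.}

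The plan is to reduce the statement, exactly as in the proof of Theorem \ref{cor3}, to a corresponding support theorem for the hyperplane Radon transform $R_{n-1}$ on $\rn$ in which the pointwise continuity and polynomial-boundedness hypotheses are replaced by an integral decay condition. Concretely, using (\ref{Slik1}) with $k=n$, we have $(\frS f)(\t\cap S^n)=(R_{n-1}g)(\t\cap\rn)$ with $g(x)=2^{n-1}(f\circ\nu)(x)(|x|^2+1)^{1-n}$, and by Lemma \ref{stereoviat} together with (\ref{ll3567g}) the hypothesis $\int_{\Om_b}(1-\eta_{n+1})^{-1-m/2}|f(\eta)|\,dS(\eta)<\infty$ translates, after the substitution $|x|=\cot(\vp/2)$, into a weighted integrability statement for $g$ of the form $\int_{|x|>a}|x|^m|g(x)|\,dx<\infty$ for every $m\in\{1,2,\ldots\}$ (one checks that the Jacobian factors and the weight $(1-\eta_{n+1})^{-1-m/2}$ combine to give precisely the power $|x|^m$ up to bounded factors, as in the identity $|x|^m g(x)=f(\eta)(1-\eta_{n+1})^{n-1-m/2}(1+\eta_{n+1})^{m/2}$ used earlier, with $k=n$). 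So the first step is just this change of variables, which is routine given the lemmas already proved.

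The second and essential step is the $L^1$-version of the classical Helgason support theorem for $R_{n-1}$: if $g$ is locally integrable on $\rn$, satisfies $\int_{|x|>a}|x|^m|g(x)|\,dx<\infty$ for all $m>0$, and $(R_{n-1}g)(\z)=0$ for a.e.\ hyperplane $\z$ with $\dist(o,\z)>a$, then $g(x)=0$ for a.e.\ $|x|>a$. I would prove this by the standard spherical-harmonic/Abel-integral reduction: decompose $g$ into spherical harmonic components $g(x)=\sum_{j,\ell}g_{j,\ell}(|x|)Y_{j,\ell}(x/|x|)$; the hyperplane Radon transform acts component-wise, turning each radial profile $g_{j,\ell}(r)$ into a one-dimensional Abel-type transform of its (Gegenbauer-twisted) profile, and the rapid-decay moment condition guarantees all these integrals converge absolutely and that the resulting transformed profiles are continuous and rapidly decreasing on $(a,\infty)$; the vanishing for $\dist>a$ then forces, by injectivity of the Abel operator on this range (fractional differentiation), each $g_{j,\ell}$ to vanish on $(a,\infty)$, hence $g=0$ there. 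The continuity of $g$ is nowhere needed: the moment hypothesis alone gives enough regularity of the Abel transforms to run the fractional-differentiation inversion. Alternatively one can invoke directly the known $L^1$-support theorem for $R_{n-1}$ from the literature (e.g.\ the results underlying Theorem 7.13 in \cite{Ru15}) and simply cite it; since the statement explicitly says ``cf. \cite[Theorem 7.13]{Ru15}'', I would present the argument as a transcription of that result through the stereographic dictionary.

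Then the third step assembles the pieces exactly as in the proof of Theorem \ref{cor3}: the sphere region $\Om_b$ corresponds under $\nu$ to the exterior $\{|x|>a\}$ with $a=\sqrt{(1+b)/(1-b)}$, and $\tilde\Om_b$ corresponds to the set of hyperplanes at distance $>a$; so $(\frS f)(\t\cap S^n)=0$ a.e.\ on $\tilde\Om_b$ gives $(R_{n-1}g)(\z)=0$ a.e.\ for $\dist(o,\z)>a$, whence $g=0$ a.e.\ on $|x|>a$, whence $f=0$ a.e.\ on $\Om_b$. The main obstacle, and the only non-bookkeeping point, is the second step --- establishing the support theorem for $R_{n-1}$ under a pure moment-integrability hypothesis rather than continuity plus rapid decrease. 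The delicate part there is verifying that the Abel-type integrals of the spherical-harmonic components are well enough behaved (absolutely convergent, continuous, suitably decaying near infinity) for the fractional-differentiation inversion on the half-line $(a,\infty)$ to be justified; this is where the full strength of ``for all $m$'' is consumed. Everything else --- the change of variables, the correspondence of regions, the component-wise action of $R_{n-1}$ --- is standard and already available in the cited sources.
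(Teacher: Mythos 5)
Your reduction steps are correct and match the paper's intent exactly: the stereographic dictionary of Theorem \ref{th1} and Lemma \ref{stereoviat} does convert the hypothesis into $\int_{|x|>a}|x|^m|g(x)|\,dx<\infty$ for all $m$ with $a=\sqrt{(1+b)/(1-b)}$, the region correspondences are right, and the paper itself supplies no more than this: it states the result as a transcription of \cite[Theorem 7.13]{Ru15} and only remarks that the underlying proof ``relies on the theory of spherical harmonics and the structure of the null space of the hyperplane Radon transform.'' So your fallback option --- citing the known $L^1$ support theorem for $R_{n-1}$ and translating it --- is precisely what the paper does, and on that reading your proposal is complete.

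The gap is in your self-contained sketch of the key step. After the spherical harmonic decomposition, the equation for the degree-$j$ component is not a plain Abel equation but a Gegenbauer-weighted Abel-type equation, of the form
\begin{equation*}
\varphi_j(p)=c\intl_p^\infty g_j(r)\,C_j^{\lambda}\!\left(\frac{p}{r}\right)(r^2-p^2)^{(n-3)/2}\,r\,dr ,
\end{equation*}
and this operator is \emph{not} injective on functions on $(a,\infty)$: for $j\ge 1$ its null space contains specific negative powers $r^{-\mu}$ (this is exactly ``the structure of the null space'' the paper alludes to, the source of Helgason's counterexample cited in the preceding Remark, and the reason the $k<n-1$ analogue is open). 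So the vanishing of $\varphi_j$ on $(a,\infty)$ does not ``force $g_{j,\ell}=0$ by injectivity of the Abel operator''; one must first identify the finite-dimensional space of power-function solutions and then use the all-moments hypothesis to exclude them. In other words, the condition ``for all $m$'' is consumed not (only) in securing absolute convergence and regularity of the transformed profiles, as you assert, but in killing genuine null-space elements. Your sketch as written would prove too much --- it would equally yield the support theorem without any decay hypothesis, which is false. Either repair step two by making the null-space analysis explicit, or drop the sketch and rest on the citation, as the paper does.
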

The proof of this fact relies on the theory of spherical harmonics and the structure of the null space of the hyperplane Radon transform. 
The description of the null space of  the $k$-plane  transform in $\rn$ with $k<n-1$ is more complicated (see \cite{ER} for discussion), and the lower-dimensional analogue of Theorem \ref {pro} is unknown.
\end{remark}

 \subsection {Inversion Formulas}
Theorem \ref{th1} yields explicit inversion formulas for $\frS f$. Specifically, let us write (\ref{Slik1}) as
\be\label {iny} \frS f=AR_{k-1}Bf,\ee
where the operators $A$ and $B$ are defined by
\[
(Ah)(\t \cap S^n)= (h\circ \nu)(\t \cap S^n), \quad (Bf)(x)=\frac{2^{k-1} (f\circ \nu)(x)}{(|x|^2+1)^{k-1}}.\]
Hence, if $\frS f=F$, then
\be\label {inv} f= B^{-1}R_{k-1}^{-1} A^{-1}F,\ee
where $A^{-1}$ and $B^{-1}$ act by the rule
\bea
 (A^{-1} F)(\t \cap \rn)&=& (F \circ \nu^{-1})(\t \cap \rn), \nonumber\\
   (B^{-1} g)(\eta)&=&(1-\eta_{n+1})^{1-k} (g\circ \nu^{-1})(\eta),\nonumber\eea
\[\nu^{-1}(\eta)\equiv \nu^{-1}(\om \,\sin \vp + e_{n+1} \cos \, \vp)= \om \cot (\vp/2);\]
 see Figure 1.

  A plenty of inversion formulas  for $R_{k-1}$ are available in the literature; see, e.g., \cite{GGG1}, \cite [p. 33] {H11}, \cite{Ru04b, Ru13b, Ru15}, and references  therein.
  %We leave to the interested reader to combine them with $A^{-1}$ and $B^{-1}$.
 The choice of the analytic expression for  $R_{k-1}^{-1}$ in (\ref{inv})
 depends on the class of functions $f$ or, equivalently, on the class of functions $g=Bf$. In particular, many inversion formulas for  $R_{k-1} g$ are known 
  when $g\in L^p (\rn)$;  see an example in Appendix. By Lemma \ref{stereoviat} and (\ref{ll3567g}), the condition $g\in L^p (\rn)$ is equivalent to
\be\label{gYYvtCov}
(1-\eta_{n+1})^{k-1-n/p} f(\eta) \in L^p (S^n), \qquad 1\le p< \frac{n}{k-1}.\ee
 The restriction  $p< n/(k-1)$ is sharp because it is  inherited from the relevant sharp restriction  for $R_{k-1}g$.

 It is also known that if $k<n$, then the inversion problem for the Radon-John transform $(R_{k-1}g)(\z)$ is overdetermined if $\z$ varies over the set 
 of {\it all} $(k-1)$-planes in $\rn$. Gel'fand's celebrated question  \cite {Gelf60} asks   how to eliminate this overdeterminedness. In our case, it 
 means that we want to define an $n$-dimensional submanifold  $M$ of the set of all  $(k-1)$-planes $\z$ in $\bbr^{n+1}$ 
 so that  $g$  could be reconstructed from   $(R_{k-1}g)(\z)$ when the latter is known only for $\z$ belonging to $M$. 
 
 There are several ways to do this. For instance, one can choose $M$  to be
 the subset of all $(k-1)$-planes which are parallel to a fixed $k$-dimensional coordinate plane; see \cite {Ru15a} and references therein for details. 
 Thus, to reconstruct $f$ from $(\frS f)(\t \cap S^n)$, $\t \in \T$, it suffices to restrict $\t$ to the subset $\nu (M) \subset \T$.

In conclusion we observe that an idea of factorization of a given operator in terms of auxiliary operators with known properties (cf.  (\ref{iny})), is widely used in 
Analysis; see, e.g., \cite [Section 3.1]{P1}, where it is applied to a wide class of Funk-type transforms.

\section{Appendix}

Below we recall some known facts for the $k$-plane transforms; see, e.g., \cite {H11, Ru04b}. For the sake of convenience, we replace $k-1$ in (\ref{Slik})  by $k$ and set
\[\vp (\z)\equiv (R_{k} g)(\z)=\intl_{\z} g(x)\, d_\z x, \qquad \z\in G(n, k),\]
where $G(n, k)$ is the Grassmannian bundle  of all $k$-dimensional affine planes in $\rn$ and  $d_\z x $ stands for the Euclidean volume element in $\z$.
The {\it dual
$k$-plane transform} $R_k^*$  averages  functions $\vp$ on $G(n,k)$  over all $k$-planes  passing through  $x\in \rn$:
\be  \label{mmsdcrt} (R_k^* \vp)(x)=\intl_{O(n)}\vp(\g\zeta_0 +x) \,d\g.\ee
Here $\zeta_0$ is an arbitrary fixed $k$-plane through the origin and $d\g$ denotes the  Haar probability measure on the orthogonal group $O(n)$.

To formulate the inversion result for $R_{k} g$  in the  $L^p$ setting, we  invoke Riesz fractional derivatives, which  can be defined as  hypersingular integrals of the form
\bea
\label{invs1}(\bbd^k
h)(x)&\equiv&\frac{1}{d_{n,\ell}(k)}\intl_{\bbr^n}
\frac{(\Del^\ell_y h)(x)}{|y|^{n+ k}}\, dy\\
\label{invs1a}&=&\lim\limits_{\e \to
0}\frac{1}{d_{n,\ell}(k)}\intl_{|y|>\e} \frac{(\Del^\ell_y
h)(x)}{|y|^{n+ k}}\, dy. \eea
In our case, $h=R_k^*\vp$, 
\[(\Del^\ell_y h)(x)=\sum_{j=0}^\ell (-1)^j {\ell \choose j}
h(x-jy)\] is the finite difference of $h$ of order $\ell$,
\bea
d_{n,\ell}(k)&=&\frac{\pi^{n/2}}{2^k \Gamma ((n+k)/2)} \nonumber\\
{} \nonumber\\
&\times& \left\{ \!
 \begin{array} {ll} \! \Gamma (-k/2) B_l (k) \!  & \mbox{if $ k \neq 2, 4, 6, \ldots $,}\\
{}\\
\displaystyle{\frac{2 (-1)^{k/2-1}}{(k/2)!} \left [\frac {d}{d\a} B_l (\a)\right ]_{\a=k}}, & \mbox{if $ k = 2, 4, 6, \ldots \, ,$}\\
\end{array}
\right. \nonumber\eea
\[
B_l (\a) = \sum_{j=0}^l (-1)^{j} {l \choose j} j^{ \a}.
\]
The integer $\ell$ is arbitrary  with the choice $\ell = k$
if $k = 1, 3, 5, \ \ldots \ $ and any $ \ell > 2[k/2]$ (the integer part of $k/2$), otherwise.
 Further details can be found in 
\cite[Section 3.5]{Ru15} and \cite [Chapter 3]{Sam}.

\begin{theorem}\label{howa1fu} Let    $1 \le k \le n-1$. If $g\in L^p (\rn)$, $1\le p <n/k$, and $R_kg=\vp$, then,
\be\label{kryafeu}
g= c_{k,n}^{-1} \bbd^k R_k^*\vp, \qquad  c_{k,n}=\frac{2^{k}\pi^{k/2} \Gam (n/2)}{\Gam ((n-k)/2)},\ee
where the Riesz fractional derivative $\bbd^k$ is defined by (\ref{invs1})-(\ref{invs1a}).
The limit in (\ref{invs1a}) can be interpreted 
  in the $L^p$-norm and in the almost
 everywhere sense. If, moreover,  $g$ is continuous, then the convergence in
 (\ref{invs1a}) is uniform on $\bbr^n$.
\end{theorem}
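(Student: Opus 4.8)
\textbf{Proof proposal for Theorem \ref{howa1fu}.} The plan is to reduce the statement to the corresponding inversion formula for the $k$-plane Radon transform as established in the literature (e.g.\ \cite[Section 3.5]{Ru15} and \cite{Ru04b}), rather than to reprove it from scratch; the role of this appendix statement is to record a clean $L^p$ version with the almost-everywhere and uniform-convergence refinements. First I would recall that the composition $R_k^* R_k$ acts on nice functions (say, Schwartz functions) as a Riesz potential: $R_k^* R_k g = c_{k,n}\, I^k g$, where $I^k$ is the Riesz potential of order $k$ with the normalization fixed by $c_{k,n}=2^{k}\pi^{k/2}\Gam(n/2)/\Gam((n-k)/2)$. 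This is a standard Fourier-side computation (each of $R_k$ and $R_k^*$ contributes a factor that combines to $|\xi|^{-k}$ up to the constant), and I would either cite it or sketch it on the Fourier side. Combined with the fact that $\bbd^k$ is a left inverse of $I^k$ on an appropriate class, one gets formally $c_{k,n}^{-1}\bbd^k R_k^*\vp = c_{k,n}^{-1}\bbd^k R_k^* R_k g = \bbd^k I^k g = g$.

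The second step is to justify that the chain of identities is legitimate for $g\in L^p(\rn)$ with $1\le p<n/k$. Here the two ingredients are: (a) $R_kg=\vp$ is defined a.e.\ and $R_k^*\vp$ is a well-defined locally integrable function — this follows from the mixed-norm / existence estimates for the $k$-plane transform on $L^p$ (the restriction $p<n/k$ is exactly what makes $R_k^*R_k$ land in the natural Riesz-potential range); (b) the hypersingular integral $(\bbd^k h)(x)$ with $h=R_k^*\vp$ converges and inverts the Riesz potential. For (b) I would invoke the theory of hypersingular integrals / Riesz fractional derivatives as in \cite[Chapter 3]{Sam}: for $f\in L^p$ with $1\le p<n/k$, $I^k f$ is a well-defined Riesz potential and $\bbd^k I^k f = f$ with the limit in \eqref{invs1a} interpreted in $L^p$-norm and a.e.; the choice of $\ell$ (namely $\ell=k$ for odd $k$, $\ell>2[k/2]$ otherwise) is precisely what guarantees $d_{n,\ell}(k)\neq 0$ and the truncated integrals converge. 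Density of Schwartz functions in $L^p$ together with the boundedness of the relevant truncated operators then upgrades the formal identity to the $L^p$ statement.

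For the regularity refinements: the $L^p$-norm convergence of the truncated hypersingular integrals follows from a uniform-in-$\e$ bound on the truncation operators $\bbd^k_\e$ acting on $R_k^*R_k(L^p)$, and the a.e.\ convergence follows from a standard maximal-function argument (the truncations are dominated by a Hardy–Littlewood-type maximal operator applied to $g$), both of which are in \cite[Chapter 3]{Sam}. If in addition $g$ is continuous, then $h=R_k^*R_kg=c_{k,n}I^k g$ is continuous, and the uniform convergence of $\bbd^k_\e h\to c_{k,n}g$ on $\rn$ follows from the same domination estimate together with uniform continuity of $I^kg$ on compact sets and the decay of the potential at infinity.

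The main obstacle is step (b) together with the domain bookkeeping in step two: one must be careful that $R_k^*\vp$ genuinely reproduces $c_{k,n}I^kg$ as a tempered distribution \emph{and} as an $L^p$-type function, so that the hypersingular inversion theorem from \cite{Sam} applies verbatim. The Fourier-side identity $R_k^*R_k = c_{k,n}I^k$ is classical but holds a priori only on Schwartz functions; extending it to $L^p$, $1\le p<n/k$, requires the existence/continuity results for $R_k$ and $R_k^*$ on $L^p$ (so that both sides are well-defined locally integrable functions and agree), after which the self-contained inversion of the Riesz potential by $\bbd^k$ does the rest. Everything else — choosing $\ell$, the constants $d_{n,\ell}(k)$, the maximal-function estimates — is routine and already recorded in \cite{Ru15,Sam}.
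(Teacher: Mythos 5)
Your outline is correct and coincides with the method actually used for this result in the cited sources (the Funk--Radon--Helgason scheme: $R_k^*R_k=c_{k,n}I^k$ followed by inversion of the Riesz potential via the hypersingular integral $\bbd^k$); the paper itself states Theorem \ref{howa1fu} as a recalled known fact and gives no proof, deferring exactly to \cite{Ru04b}, \cite[Section 3.5]{Ru15} and \cite[Chapter 3]{Sam}. One small simplification over your ``main obstacle'': the identity $R_k^*R_kg=c_{k,n}I^kg$ extends from Schwartz functions to all of $L^p$, $1\le p<n/k$, directly by Tonelli--Fubini (both sides are the absolutely convergent integral of $|x-y|^{k-n}|g(y)|$ up to a constant, finite a.e.\ in this range of $p$), so no Fourier-side density argument is needed.
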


The following support theorem is   a slight, but important, improvement of the remarkable discovery by Kurusa \cite [Theorem 3.1] {Ku94} who established 
 essential difference between the cases $k=n-1$ and $k<n-1$.

\begin{theorem}\label{hobfu} 
Let $(R_{k} g)(\z)$ be the $k$-plane transform of a continuous function $g$ on $\rn$, satisfying 
\be\label{kzafeu}
|x|^{\lam }|g(x)| \le c \quad \text{for some} \quad\lam >k, \qquad c=\const\footnote{In \cite {Ku94} it was assumed $\lam=k+1$.}.\ee
We assume $1 \le k<n-1$ and denote by $|\z|$ the Euclidean distance from $\z$ to the origin. Then for any $r>0$ the following implication holds:
\be\label{kzafeu1}
(R_{k} g)(\z)=0 \; \forall \, |\z| >r  \; \Longrightarrow \; g(x)=0 \; \forall \, |x| >r.\ee
\end{theorem}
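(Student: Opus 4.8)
The plan is to reduce the statement about the $k$-plane transform $R_k$ to the classical hyperplane case, where Helgason's support theorem applies, by slicing $\rn$ with $(k+1)$-dimensional planes. First I would observe that it suffices to prove $g(x)=0$ for $|x|>r$ by checking this on every $(k+1)$-dimensional affine plane $E$ through a given point; indeed a point with $|x|>r$ lies in some $(k+1)$-plane $E$, and on $E$ the restriction $g|_E$ is a continuous function on $\bbr^{k+1}$ whose $k$-plane (= hyperplane in $E$) transform vanishes on all $k$-planes $\z\subset E$ with $\dist(\z,0)>r$ — these are a subfamily of the hyperplanes of $E$. The key point is that for a hyperplane $\z$ inside $E$, its distance to the origin of $\rn$ is at least $\dist(E,0)$, so if $E$ itself is far enough from the origin, \emph{all} hyperplanes of $E$ satisfy $|\z|>r$ and carry zero integral of $g|_E$; then $g|_E\equiv 0$ by injectivity of the hyperplane transform, which handles all points outside a large ball. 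The subtlety is the intermediate region, and there one uses hyperplanes $\z\subset E$ with $\dist(E,0)\le r$ but $\dist(\z,0)>r$.

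Next I would make the decay bookkeeping precise. On a $(k+1)$-plane $E$ at distance $\dist(E,0)=\rho$, a point $x\in E$ has $|x|^2=\rho^2+|x'|^2$ where $x'$ is the coordinate in $E$ relative to its foot point, so $|x|\ge |x'|$ and hence $|g|_E(x')|\le c\,|x|^{-\lam}\le c\,|x'|^{-\lam}$ with $\lam>k = \dim E - 1$. Thus $g|_E$ is a continuous function on $\bbr^{k+1}$ decaying faster than $|x'|^{-(k+1)+\e}$... — more precisely, faster than the first negative power $|x'|^{-k}$ that guarantees absolute convergence of all hyperplane integrals in $\bbr^{k+1}$ but \emph{not} rapid decrease. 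So this is exactly the borderline case of the hyperplane support theorem in $\bbr^{k+1}$: I need the version of Helgason's theorem for $\bbr^m$ ($m=k+1$) in which the rapid-decrease hypothesis is weakened to $|x|^{\lam}|g(x)|\le c$ for a single $\lam>m-1$. This is the genuinely new ingredient, and the reason the theorem requires $k<n-1$: it is precisely the condition $\lam>k$ that makes $g|_E$ satisfy the hypotheses of the hyperplane support theorem on $E$, and the slicing trick only buys one extra dimension, so it is $m-1 = k$, not $k$ itself, against which $\lam$ must be compared.

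Therefore the heart of the argument is a lemma: \emph{if $h\in C(\bbr^m)$ with $|x|^{\lam}|h(x)|\le c$ for some $\lam>m-1$, and the hyperplane Radon transform of $h$ vanishes on all hyperplanes at distance $>r$ from the origin, then $h$ vanishes outside the ball of radius $r$.} I would prove this by following Helgason's classical proof (reduction to the radial case via the spherical-mean / dual-transform averaging, then a one-dimensional Abel-integral argument, or alternatively the projection-slice identity reducing it to the $m=1$ case), checking at each step that $\lam>m-1$ is exactly enough: it ensures convergence of the relevant integrals of $R_1$-transforms along every line, and — crucially — it ensures that the auxiliary radial function $H(s) = (R^*R h)(x)$-type average also satisfies a decay bound of order $s^{-\lam'}$ with $\lam'>0$ large enough to justify the final fractional-integration/uniqueness step without needing $h$ to decay rapidly. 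The main obstacle, then, is this borderline bookkeeping in the hyperplane support theorem: verifying that replacing "rapid decrease" by the single inequality (\ref{kzafeu}) with $\lam>m-1$ does not break any of the convergence or uniqueness steps in Helgason's proof. Once that lemma is in hand, the passage back from $E$ to $\rn$ is immediate: every $x$ with $|x|>r$ lies on some $(k+1)$-plane $E$ through $x$ with $\dist(E,0)\le |x|$; choosing $E$ so that additionally $\dist(E,0)$ can be taken arbitrarily small (rotate $E$ about $x$), one still has all hyperplanes $\z\subset E$ with $\dist(\z,0)>r$ in the vanishing set, the lemma gives $g|_E(x)=0$, and since $x$ was arbitrary we conclude $g(x)=0$ for all $|x|>r$.
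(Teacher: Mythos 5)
Your slicing by $(k+1)$-planes is the right move, and in fact your opening observation already contains the entire proof: if $E$ is a $(k+1)$-plane with $\dist(E,0)>r$, then \emph{every} hyperplane of $E$ lies at distance $>r$ from the origin, so the full hyperplane Radon transform of $g|_E$ vanishes on $E\cong\bbr^{k+1}$, and mere \emph{injectivity} (which holds under the decay $\lam>k$, exactly the integrability threshold for the hyperplane transform on $\bbr^{k+1}$) forces $g|_E\equiv 0$. What you miss is that this case is not confined to ``points outside a large ball'': for any $x$ with $|x|>r$, the plane $E=x+V$ with $V$ a $(k+1)$-dimensional subspace of $x^{\perp}$ (such $V$ exists precisely because $k<n-1$) satisfies $\dist(E,0)=|x|>r$. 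Hence there is no ``intermediate region,'' no support theorem inside $E$ is ever needed, and this is exactly the paper's proof.

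The route you actually take instead rests on a key lemma that is false. A hyperplane support theorem in $\bbr^m$ under the single polynomial bound $|x|^{\lam}|h(x)|\le c$ with $\lam>m-1$ does not hold: the classical counterexample (Helgason's Remark~2.9, which the paper itself invokes in Remark~4.5 to explain why the hyperplane case is genuinely different) is $h(z)=z^{-j}$ on $\{|z|\ge 1\}\subset\bbr^2\cong\bbc$ with $j\ge 2$, extended smoothly inside the disk. Its integral over every line missing the unit disk vanishes, since the antiderivative $(1-j)^{-1}w^{-1}(z_0+tw)^{1-j}$ tends to $0$ at both ends of the line; yet $h$ decays like $|z|^{-j}$ with $j$ as large as you please and is not zero outside the disk. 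So for $m=2$ (that is, $k=1$) your lemma already fails, and no amount of ``borderline bookkeeping'' in Helgason's argument can rescue it: the uniqueness step genuinely requires superpolynomial decay in the hyperplane setting. The whole point of the theorem is that for $k<n-1$ one can bypass the support theorem altogether and use only injectivity on slices that lie entirely outside the ball.
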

\begin{proof} Fix any $(k+1)$-plane $T$ at a distance $t>r$ from the origin and let $\rho_T \in SO(n)$ be a rotation satisfying 
\[
\rho_T : \, \bbr^{k+1} +te_n \rightarrow T, \qquad \bbr^{k+1} = \bbr e_1 \oplus \cdots \oplus \,\bbr e_{k+1}.\]
Suppose that $\z\subset T$ and denote 
 $\z_1=\rho_T^{-1} \z \subset \bbr^{k+1} +te_n$, $\; g_T (x)=g (\rho_T x)$. 
By the assumption, $(R_{k} g)(\z)=0$ for all $\z\subset T$, and therefore 
$(R_{k} g)(\rho_T \z_1) =(R_{k} g_T)(\z_1)=0$ for all $k$-planes $\z_1$ in $ \bbr^{k+1} +te_n$. Every such plane has the form $\z_1=\z_2 +te_n$, where $\z_2$ is a $k$-plane in $\bbr^{k+1}$. Hence,
\[
0=(R_{k} g_T)(\z_2 +te_n)=\intl_{\z_2} g_T (y +te_n)\, dy= (R_{k} \tilde g)(\z_2),\]
which is the $k$-plane  transform transform of $\tilde g (y)= g_T (y +te_n)$, $y\in \bbr^{k+1}$. By (\ref{kzafeu}), $|\tilde g (y)|\le c\, |y|^{-\lam }$. Indeed,
\[ 
|y|^{\lam }|\tilde g (y)|=|y|^{\lam}\, |g_T (y +te_n)|\le \frac{c\, |y|^{\lam}}{|y+te_n|^{\lam}} \le c.\]
By the injectivity of the $k$-plane (see \cite[Theorem 3.4]{Ru13b}), the above inequality implies  $\tilde g (y)= g_T (y +te_n)=0$ for all $y\in \bbr^{k+1}$. 
Because $T$ and $t>r$ are arbitrary, the result follows.
\end{proof}

\vskip 0.2 truecm
{\bf Acknowledgement.} I am grateful to Prof. \'Arp\'ad Kurusa for discussion of Theorem \ref{hobfu}, and to Ms. Emily Ribando-Gros  for figures  in this article. 
%My special thanks go 
%to the referees who  thoughtful read the manuscript and made valuables suggestions.

\bibliographystyle{amsplain}

\end{document}